\DeclareRobustCommand{\greektext}{%
  \fontencoding{LGR}\selectfont\def\encodingdefault{LGR}}
\DeclareRobustCommand{\textgreek}[1]{\leavevmode{\greektext #1}}
\numberwithin{equation}{section} 
\numberwithin{figure}{section} 
\theoremstyle{plain}
 \theoremstyle{definition}
 \newtheorem*{defn*}{Definition}
\theoremstyle{plain}
\newtheorem{extthm}{Theorem}
  \theoremstyle{remark}
  \newtheorem*{rem*}{Remark}
\theoremstyle{plain}
\newtheorem{thm}{Theorem}
  \theoremstyle{plain}
  \newtheorem{prop}[thm]{Proposition}
  \theoremstyle{plain}
  \newtheorem{lem}[thm]{Lemma}
\newenvironment{lyxcode}
{\par\begin{list}{}{
\setlength{\rightmargin}{\leftmargin}
\setlength{\listparindent}{0pt}
\raggedright
\setlength{\itemsep}{0pt}
\setlength{\parsep}{0pt}
\normalfont\ttfamily}%
 \item[]}
{\end{list}}
  \theoremstyle{definition}
  \newtheorem{defn}[thm]{Definition}
\begin{document}
\global\long\def\bbZ{\mathbb{Z}}

\global\long\def\bbN{\mathbb{N}}

\global\long\def\bbX{\mathbb{X}}

\global\long\def\cF{\mathcal{F}}

\global\long\def\cB{\mathcal{B}}

\global\long\def\fT{\mathfrak{\tau}}

\global\long\def\cCT{\mathcal{C}_{\fT-1}}

\global\long\def\cC{\mathcal{C}}

\global\long\def\RR{\mathbb{R}}

\begin{singlespace}

\title{the zero-type property and mixing of Bernoulli shifts}
\end{singlespace}

\begin{singlespace}

\author{Zemer kosloff}
\end{singlespace}

\begin{singlespace}

\thanks{This research was was supported by THE ISRAEL SCIENCE FOUNDATION
grant No. 1114/08.}
\end{singlespace}
\begin{abstract}
\begin{singlespace}
We prove that every non-singular Bernoulli shift is either zero-type
or there is an equivalent invariant stationary product probability.
We also give examples of a type ${\rm III}_{1}$ Bernoulli shift and
a Markovian flow which are power weakly mixing and zero type.\end{singlespace}

\end{abstract}
\begin{singlespace}

\subjclass[2000]{37A40 and 37A30. }
\end{singlespace}

\maketitle

\subsection*{Introduction}

\begin{singlespace}
This article deals with the concept of zero-type for invertible non-singular
transformations $T$ of the standard probability space $\left(X,\cB,m\right)$.\textbf{ }

A non-singular transformation without an absolutely continuous invariant
probability is called zero-type, sometimes also called mixing, if
its Koopman operator is mixing, meaning that it's maximal spectral
type is a Rajchman measure. This generalises the notion of zero-type
introduced by Hajian and Kakutani \cite{H-K}.\textbf{ }\textcolor{black}{The
name mixing comes from the fact that in the case of probability preserving
transformations the classical notion of mixing is equivalent to mixing
of the Koopman operator restricted to}\textbf{\textcolor{black}{ }}$\ L_{0}^{2}\left(X,m\right)=L^{2}(X,m)\ominus\mathbb{C}$.
In the interplay between non singular ergodic theory and infinitely
divisible processes, the zero type transformations are associated
with the classical notion of mixing of infinitely divisible processes,
see \cite{RS} or \cite{Ro}. 

In the first part we prove that every non-singular Bernoulli shift
is either zero-type or there is an equivalent invariant product probability.
Thus the Hamachi shift \cite{Ham} and the type ${\rm III}_{1}$ shift
given in \cite{Kos} are examples of conservative, ergodic, zero-type
transformations which do not posses an $m$ equivalent $\sigma$-finite
invariant measure, also known as type ${\rm III}$ and zero type transformations.
For another construction of zero type, type ${\rm III}$ transformations
see \cite[Theorem 0.3]{Da1}.

A non singular transformation $T$ is weakly mixing if $S\times T$
is ergodic for every ergodic probability preserving transformation
$S$. It follows that if $T$ is a probability preserving weakly mixing
transformation then $T\times\cdots\times T$ ($k-$times) is ergodic
for every $k\in\mathbb{N}$. In general this is not true, as there
exist weakly mixing transformations such that $T\times T$ is not
ergodic. In an attempt to understand the notion of weak mixing for
non singular transformations the concept of power weakly mixing transformations
was introduced. $T$ is called \textit{power weakly mixing} if for
any $l_{1},l_{2},....,l_{k}\in\bbZ\backslash\{0\}$, \[
\left(X^{k},P^{\otimes k},T^{l_{1}}\times T^{l_{2}}\times\cdots\times T^{l_{k}}\right)\]
is an ergodic automorphism where \[
P^{\otimes k}=\underset{k-\mbox{times}}{\underbrace{P\otimes P\otimes\cdots\otimes P}}.\]
A weaker notion is that of infinite ergodic index which means that
for every $k\in\mathbb{N}$, the $k$-fold product of $T$ is an ergodic
automorphism. In \cite{AFS} it was shown that Chacon's non singular
type ${\rm III}_{\lambda}$ transformation, $0<\lambda\leq1$, is
a power weakly mixing transformation. In \cite{Da2} a construction
of a transformation which is of infinite ergodic index but not power
weakly mixing was given. Later in \cite{DP} a flow was constructed
such that all times are of infinite ergodic index. These constructions
use the cutting and stacking method which usually doesn't give a zero
type transformation.
\end{singlespace}

We give a construction of a Bernoulli shift which is power weakly
mixing and type ${\rm III}_{1}$. By the first part it is also zero
type. Finally a continuous time flow $\left\{ \phi_{t}\right\} _{t\in\RR}$
is given such that for every $a_{1},a_{2},...,a_{n}\in\RR$, \[
\phi_{a_{1}}\times\phi_{a_{2}}\times\cdots\times\phi_{a_{n}}\]
is ergodic. This flow is the time shifts of a continuous time Markov
Chain.

\subsection*{Preliminaries:}

\begin{singlespace}
Let $\left(X,\cB,P\right)$ be a probability space. An invertible
measurable transformation $T:X\to X$ is said to be \textit{non-singular}
with respect to $P$ if it preserves the $P-$null sets. i.e for every
$A\in\cB,$ $P(A)=0$ if and only if \[
P\circ T(A):=P\left(TA\right)=0.\]
A measure $m$ on $X$ is said to be $T-$\textit{invariant} if for
every\textit{ $A\in\cB$, \[
P\circ T\left(A\right)=P(A).\]
}If $T$ is non-singular, then for every $n$, $P\circ T^{n}$ is
absolutely continuous with respect to $P$. By the Radon-Nikodym theorem
there exist measurable functions $\frac{dP\circ T^{n}}{dP}\in L_{1}(X,P)_{+}$
such that for every $A\in\mathcal{B}$ , \[
P\circ T^{n}(A)=\int_{A}\left(\frac{dP\circ T^{n}}{dP}\right)\, dP.\]

Denote $\left(T^{n}\right)'(x):=\frac{dP\circ T^{n}}{dP}$.

A set $W\in\cB$ is called \textit{wandering} if $\left\{ T^{n}W\right\} _{n=-\infty}^{\infty}$
are disjoint. As in \cite[p.7]{Aar} denote by $\mathfrak{D}$ the
measurable union of all wandering sets for $T$, this set is $T-$invariant.
Its complement is denoted by $\mathfrak{C}$. 
\end{singlespace}
\begin{defn*}
\begin{singlespace}
We call $\left(X,\cB,P,T\right)$ \textit{dissipative} if $\mathfrak{D}=X$.
If $\mathfrak{C}=X$ then $\left(X,\cB,P,T\right)$ is said to be
\textit{conservative}. \end{singlespace}

\end{defn*}
\begin{singlespace}
We will use the following version of Hopf's decomposition theorem
which says that the conservative and dissipative parts can be separated
in the following way. 
\end{singlespace}
\begin{extthm}
\begin{singlespace}
\label{ext: Hopf's theorem}For every non-singular transformation
$T$ of the probability space $(X,\cB,P)$ there exists a decomposition
$X=\mathfrak{D\cup C}$ such that $\left(X,\mathcal{B},P|_{\mathfrak{D}},T\right)$
is dissipative and $\left(X,\cB,P|_{\mathfrak{C}},T\right)$ is conservative.
Furthermore \[
\sum_{k=0}^{\infty}\left(T^{-n}\right)'(x)<\infty\ a.e.\ x\in\mathfrak{D}\]
and\[
\sum_{k=0}^{\infty}\left(T^{-n}\right)'(x)=\infty\ a.e.\ x\in\mathfrak{C.}\]
\end{singlespace}

\end{extthm}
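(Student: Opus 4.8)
This is the classical Hopf decomposition theorem, which I would prove by analysing the function $h:=\sum_{n=0}^{\infty}\left(T^{-n}\right)'$ (it takes values in $[1,\infty]$, the $n=0$ term being $1$); the goal is to show that $\{h<\infty\}$ coincides mod $P$ with $\mathfrak{D}$, after which the summability dichotomy in the statement is immediate. The starting point is the cocycle identity $\left(T^{a+b}\right)'(x)=\left(T^{a}\right)'\!\left(T^{b}x\right)\left(T^{b}\right)'(x)$, which gives $\left(T^{-n}\right)'(Tx)=\left(T^{1-n}\right)'(x)/\left(T\right)'(x)$; summing over $n\geq0$ and reindexing yields
\[
h\circ T=1+\frac{h}{\left(T\right)'}\qquad P\text{-a.e.}
\]
As $0<\left(T\right)'<\infty$ a.e.\ by non-singularity, this shows $\{h<\infty\}$ is $T^{-1}$-invariant, hence (since $T$ is invertible and non-singular) $T$-invariant mod $P$; so it remains to prove $\mathfrak{D}=\{h<\infty\}$ mod $P$.

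For the inclusion $\mathfrak{D}\subseteq\{h<\infty\}$: if $W$ is wandering then the sets $T^{-n}W$ ($n\geq0$) are pairwise disjoint, so
\[
\int_{W}h\,dP=\sum_{n=0}^{\infty}\int_{W}\left(T^{-n}\right)'dP=\sum_{n=0}^{\infty}P\!\left(T^{-n}W\right)=P\Big(\bigsqcup_{n\geq0}T^{-n}W\Big)\leq1,
\]
whence $h<\infty$ a.e.\ on $W$; and since, by a standard exhaustion argument, $\mathfrak{D}$ is mod $P$ a countable union of wandering sets, we get $h<\infty$ a.e.\ on $\mathfrak{D}$. The same description also makes the structural assertions routine: every wandering set of $T$ lies in $\mathfrak{D}$ mod $P$, so the wandering sets of $T$ restricted to the invariant set $\mathfrak{D}$ have measurable union $\mathfrak{D}$ (dissipativity), while any wandering set contained in $\mathfrak{C}=\mathfrak{D}^{c}$ is $P$-null (conservativity).

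The substantive inclusion $\{h<\infty\}\subseteq\mathfrak{D}$ mod $P$ I would obtain from the following lemma: \emph{if $A\in\cB$ has $P(A)>0$ and $h$ is bounded on $A$, then $A$ contains a wandering set of positive measure.} Granting it, if $P(\{h<\infty\}\setminus\mathfrak{D})>0$ then, since $\{h<\infty\}=\bigcup_{N}\{h\leq N\}$, some set $A=\{h\leq N\}\setminus\mathfrak{D}$ has $P(A)>0$ with $h$ bounded on $A$, hence contains a positive-measure wandering set; but such a set lies in $\mathfrak{D}$ by definition of $\mathfrak{D}$, contradicting $A\cap\mathfrak{D}=\emptyset$. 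To prove the lemma, boundedness of $h$ on $A$ gives $\int_{A}h\,dP<\infty$, i.e.\ $\sum_{n\geq0}P(T^{-n}A)<\infty$, so by the Borel--Cantelli lemma a.e.\ point lies in only finitely many of the sets $T^{-n}A$; in particular, for a.e.\ $x\in A$ the last visit $m(x):=\max\{n\geq0:T^{n}x\in A\}$ is finite. The set $W:=\{x\in A:T^{n}x\notin A\ \text{for all}\ n\geq1\}$ is wandering (if $w\in W$ and $T^{n}w\in W\subseteq A$ for some $n\geq1$ this contradicts the definition of $W$), the map $x\mapsto T^{m(x)}x$ carries a.e.\ point of $A$ into $W$, and on each level set $\{m=j\}$ it equals the injective non-singular map $T^{j}$; since $P(A)>0$ some level set has positive measure, which forces $P(W)>0$.

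I expect this lemma to be the main obstacle: the cocycle computation and the bookkeeping about $\mathfrak{D}$ are routine, whereas the lemma is where pointwise summability of the Radon--Nikodym cocycle must be converted into a genuine wandering set and where the null-set handling is most delicate --- note in particular that the last-visit map $x\mapsto T^{m(x)}x$ need not be globally injective, so one argues only that it cannot destroy all of the mass of $A$. Combining the two inclusions gives $\mathfrak{D}=\{h<\infty\}$ and $\mathfrak{C}=\{h=\infty\}$ mod $P$; since $h=\sum_{n\geq0}\left(T^{-n}\right)'$, this is precisely the asserted decomposition together with the claimed summability dichotomy.
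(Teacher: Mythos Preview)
Your argument is correct and is essentially the standard proof of the Hopf decomposition (as in \cite{Aar}, \S1.1): the cocycle computation showing $\{h<\infty\}$ is invariant, the Borel--Cantelli/last-visit construction of a wandering set inside any set where $h$ is bounded, and the exhaustion of $\mathfrak{D}$ by countably many wandering sets are all fine as written. One tiny remark: in the lemma you could shortcut the ``some level set has positive measure, hence $P(W)>0$'' step by observing directly that $A\subseteq\bigcup_{j\ge 0}T^{-j}W$ mod $P$, so $P(W)=0$ would force $P(A)=0$ by non-singularity.

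However, there is nothing to compare against: the paper does \emph{not} prove this statement. It is recorded as Theorem~\ref{ext: Hopf's theorem} in a separate ``external theorem'' environment (Roman-numbered, distinct from the paper's own results) and is quoted as a known version of Hopf's decomposition, with the reference \cite[p.~7]{Aar} given just above for the definition of $\mathfrak{D}$. The paper only \emph{uses} the theorem, in the last line of the proof of Theorem~6, to conclude conservativity of $S$ from $\sum_{n\ge 1}S^{n\prime}=\infty$.
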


\subsection*{The Hellinger Integral and definition of the Zero Type Property}

\begin{singlespace}
The Koopman operator $U:L_{2}\left(X,\cB,P\right)$ is then defined
by\[
Uf(x)=\sqrt{T'(x)}f\circ T(x).\]
It is a unitary operator and by the chain rule for the Radon-Nikodym
derivatives for every $n\in\bbZ$,\[
U^{n}f=\sqrt{\left(T^{n}\right)'}f\circ T^{n}.\]

\end{singlespace}
\begin{defn*}
\begin{singlespace}
A transformation is \textit{Non-Singular zero-type} ( NS zero type)
if the maximal spectral type $\sigma_{T}\in\mathcal{P}\left(\mathbb{T},\mathcal{B}\left(\mathbb{T}\right)\right)$
of $U_{T}$ is a Rajchman measure. That is its Fourier coefficients
$\hat{\sigma}_{T}(n)$ tend to $0$ as $n\to\infty$. and so for every
$f\in L_{2}(X,P)$,\[
\int_{X}f\cdot U^{n}fdP\to0\ as\ n\to\infty.\]
\end{singlespace}
\end{defn*}
\begin{rem*}
\begin{singlespace}
By looking at the Koopman operators it is seen that the zero-type
property depends only on the equivalence class of $P$. \end{singlespace}

\end{rem*}
\begin{singlespace}
\end{singlespace}
\begin{rem*}
\begin{singlespace}
\label{rem: Krengel mixing and H-K}In the case when $\left(X,\cB,m,T\right)$
is a $\sigma$-finite measure preserving transformation Krengel and
Sucheston \cite{KrS} have showed that mixing of the Koopman operator
is equivalent to the Hajian and \textit{Kakutani zero-type condition}
\cite{H-K} which states that for every $A\in\cB$ with $m(A)<\infty$,\[
\lim_{n\to\infty}m\left(A\cap T^{-n}A\right)=0.\]
\end{singlespace}
\end{rem*}
\begin{defn*}
\begin{singlespace}
Let $P,Q$ be two probability measures on $X$. The \textit{Hellinger
Integral} (see \cite{Hel} or \cite{Kak}) is then defined as \[
\rho(P,Q)=\int_{X}\sqrt{\frac{dP}{dm}}\cdot\sqrt{\frac{dQ}{dm}}\, dm\]
where $m$ is any finite measure on $X$ such that $P\ll m\ ,\ Q\ll m$.
In the special case where $P\ll Q$ we can take $m=Q$ and have \[
\rho(P,Q)=\int_{X}\sqrt{\frac{dP}{dQ}}\: dQ.\]
The function $\rho(\cdot,\cdot)$ measures the amount of singularity
of $P$ with respect to $Q$. This function satisfies that for every
$P,Q\in\mathcal{P}(X)$ ,$0\leq\rho(P,Q)\leq1$. Also $\rho(P,Q)=0$
if and only if $P$ is singular with respect to $Q$. \end{singlespace}

\end{defn*}
\begin{singlespace}
The proof of the following proposition is standard. 
\end{singlespace}
\begin{prop}
\begin{singlespace}
\label{pro:NS zero type and mixing. }Let $T$ be a non-singular transformation
of the probability space $\left(X,\cB,P\right)$. The following are
equivalent.

(i) ${\displaystyle \lim_{n\to\infty}\rho\left(P,P\circ T^{n}\right)=0}$.

(ii) $\left(T^{n}\right)'\overset{P}{\longrightarrow0}.$

(iii) $\sigma_{T}$ is a Rajchman measure. \end{singlespace}

\end{prop}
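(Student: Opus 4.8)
The plan is to pivot everything on the identity
\[
\rho\!\left(P, P\circ T^{n}\right) \;=\; \int_{X}\sqrt{\left(T^{n}\right)'}\,dP \;=\; \left\langle U^{n}\mathbf{1},\mathbf{1}\right\rangle_{L^{2}(X,P)},
\]
which holds because $P\circ T^{n}\ll P$ lets us take $m=P$ in the definition of the Hellinger integral, and because $U^{n}\mathbf{1}=\sqrt{(T^{n})'}\cdot(\mathbf{1}\circ T^{n})=\sqrt{(T^{n})'}$. Thus $\rho(P,P\circ T^{n})$ is, up to the sign of the index, the $n$-th Fourier coefficient of the spectral measure $\sigma_{\mathbf{1}}$ of the constant function, which is the bridge to (iii). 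I would then prove (i)$\Leftrightarrow$(ii), (iii)$\Rightarrow$(i), and (i)$\Rightarrow$(iii), closing the loop.

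For (i)$\Leftrightarrow$(ii) the argument is purely measure-theoretic. Put $Y_{n}:=(T^{n})'\ge 0$ and note $\int_{X}Y_{n}\,dP=P\circ T^{n}(X)=1$, so the question becomes: for non-negative functions of integral one, does $\int\sqrt{Y_{n}}\,dP\to 0$ iff $Y_{n}\to 0$ in $P$-measure? One direction is Markov's inequality applied to $\sqrt{Y_{n}}$, namely $P(Y_{n}>\varepsilon)\le\varepsilon^{-1/2}\int\sqrt{Y_{n}}\,dP$. For the other, split $\int\sqrt{Y_{n}}\,dP$ over $\{Y_{n}\le\varepsilon\}$ and $\{Y_{n}>\varepsilon\}$; the first piece is $\le\sqrt{\varepsilon}$ and the second is $\le\sqrt{P(Y_{n}>\varepsilon)}$ by Cauchy--Schwarz together with $\int Y_{n}\,dP=1$. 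Letting $n\to\infty$ and then $\varepsilon\to 0$ finishes it.

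The implication (iii)$\Rightarrow$(i) is essentially free: the maximal spectral type $\sigma_{T}$ dominates $\sigma_{\mathbf{1}}$, and a measure absolutely continuous with respect to a Rajchman measure is again Rajchman (approximate its Radon--Nikodym density in $L^{1}$ of the dominating measure by trigonometric polynomials); hence $\widehat{\sigma}_{\mathbf{1}}(n)=\langle U^{n}\mathbf{1},\mathbf{1}\rangle=\rho(P,P\circ T^{n})\to 0$. The real content is (i)$\Rightarrow$(iii), and the key estimate is the pointwise bound, valid for any $f\in L^{\infty}(X,P)$,
\[
\bigl|\langle U^{n}f,f\rangle\bigr| \;=\; \Bigl|\int_{X}\sqrt{(T^{n})'(x)}\,f(T^{n}x)\,\overline{f(x)}\,dP(x)\Bigr| \;\le\; \|f\|_{\infty}^{2}\int_{X}\sqrt{(T^{n})'}\,dP \;=\; \|f\|_{\infty}^{2}\,\rho(P,P\circ T^{n}).
\]
So under (i) we get $\langle U^{n}f,f\rangle\to 0$ for every bounded $f$; since $L^{\infty}(X,P)$ is dense in $L^{2}(X,P)$ and $\|U^{n}\|=1$, a routine approximation argument promotes this to all $f\in L^{2}(X,P)$. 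Consequently every spectral measure $\sigma_{f}$ is Rajchman, and taking $f$ to represent the maximal spectral type (possible since $L^{2}(X,P)$ is separable) yields (iii).

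I don't expect a genuine obstacle here — the proposition really is ``standard'', as the text says, once the $L^{\infty}$ bound above is spotted. The only point deserving a line of care is that ``the maximal spectral type is a Rajchman measure'' is well posed, i.e. independent of the representative chosen in the equivalence class; this again follows from the fact that any measure equivalent to a Rajchman measure is Rajchman. A secondary small check is that $\langle U^{n}\mathbf{1},\mathbf{1}\rangle$ is real and non-negative (it is, since $(T^{n})'\ge 0$), so that equating it with $\rho(P,P\circ T^{n})$ is legitimate.
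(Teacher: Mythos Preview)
Your argument is correct: the identity $\rho(P,P\circ T^{n})=\langle U^{n}\mathbf{1},\mathbf{1}\rangle$ together with the $L^{\infty}$ bound $|\langle U^{n}f,f\rangle|\le\|f\|_{\infty}^{2}\,\rho(P,P\circ T^{n})$ and the standard density/approximation steps give exactly the three implications you outline, and the (i)$\Leftrightarrow$(ii) equivalence via Markov plus Cauchy--Schwarz is clean. There is nothing to compare against, since the paper does not supply a proof at all --- it simply declares the proposition ``standard'' and moves on --- so your write-up in fact fills in what the paper omits.
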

\begin{singlespace}

\section{Bernoulli shifts are zero-type or mixing}
\end{singlespace}

\begin{singlespace}
Let $\bbX=\{0,1\}^{\bbZ}$ and $T$ be the left shift action on $\bbX$,
that is \[
\left(Tw\right)_{i}=w_{i+1}.\]
Denote the cylinder sets by \[
\left[b\right]_{k}^{l}=\left\{ w\in\bbX:\ \forall i=k,...,l,\ w_{i}=b_{i}\right\} .\]
 A measure $P={\displaystyle \prod_{k=-\infty}^{\infty}}P_{k}\in\mathcal{P}(\bbX)$
is called a product measure if for every $k<l$, and for every cylinder
$[b]_{k}^{l},$

\[
P\left(\left[b\right]_{k}^{l}\right)=\prod_{j=k}^{l}P_{j}\left(\left\{ b_{j}\right\} \right).\]
 We will say that a product measure $P$ is \textit{non-singular}
if the shift is non-singular with respect to $P$. 
\end{singlespace}

For two product probability measures $P,Q$ and $N\in\bbN\cup\{\infty\}$
define \begin{eqnarray*}
d_{N}\left(P,Q\right) & := & \sum_{k=-N}^{N}\left\{ \left(\sqrt{P_{k}\left(\left\{ 0\right\} \right)}-\sqrt{Q_{k}\left(\left\{ 0\right\} \right)}\right)^{2}\right..\\
 &  & \left.+\left(\sqrt{P_{k}\left(\left\{ 1\right\} \right)}-\sqrt{Q_{k}\left(\left\{ 1\right\} \right)}\right)^{2}\right\} \end{eqnarray*}
Notice that $d_{N}(P,Q)\uparrow d_{\infty}(P,Q)$ as $N\to\infty$.
Set $d(P,Q):=d_{\infty}(P,Q)$

\begin{singlespace}
The following lemma is a direct consequence of Kakutani's Theorem
on equivalence of product measures\cite{Kak}.
\end{singlespace}
\begin{lem}
\begin{singlespace}
\label{cla: Zero Type con for shifts}Let $P={\displaystyle \prod_{k=-\infty}^{\infty}P_{k}}$
be a product measure. Then
\end{singlespace}

(1) For any two product measures $P,Q$, \[
d(P,Q)\propto-\log\rho\left(P,Q\right).\]

(2) $P$ is non-singular if and only if\begin{equation}
d(P,P\circ T)<\infty.\label{eq: condition for zero type 1.}\end{equation}

\begin{singlespace}
(3) The shift $ $is NS zero-type if and only if\[
\lim_{n\to\infty}d\left(P,P\circ T^{n}\right)=\infty.\]
\end{singlespace}
\end{lem}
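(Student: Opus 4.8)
The plan is to deduce all three parts from Kakutani's theorem on the equivalence of product measures \cite{Kak} together with the Proposition above. The key input from \cite{Kak} is the product formula for the Hellinger integral: for product measures $P=\prod_{k}P_{k}$ and $Q=\prod_{k}Q_{k}$ on $\bbX$,
\[
\rho(P,Q)=\prod_{k\in\bbZ}\rho(P_{k},Q_{k}),\qquad\rho(P_{k},Q_{k})=\sqrt{P_{k}(\{0\})Q_{k}(\{0\})}+\sqrt{P_{k}(\{1\})Q_{k}(\{1\})},
\]
together with the dichotomy that $P\sim Q$ exactly when this (absolutely convergent) product is positive, and $P\perp Q$ otherwise.

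For (1), I would first expand the squares defining $d$: for each $k$,
\[
\Bigl(\sqrt{P_{k}(\{0\})}-\sqrt{Q_{k}(\{0\})}\Bigr)^{2}+\Bigl(\sqrt{P_{k}(\{1\})}-\sqrt{Q_{k}(\{1\})}\Bigr)^{2}=2-2\rho(P_{k},Q_{k}),
\]
so that $d(P,Q)=2\sum_{k}\bigl(1-\rho(P_{k},Q_{k})\bigr)$, whereas taking $-\log$ in the product formula gives $-\log\rho(P,Q)=\sum_{k}\bigl(-\log\rho(P_{k},Q_{k})\bigr)$. I would then compare these two non-negative series term by term via the elementary inequalities $x\le-\log(1-x)$ for $x\in[0,1)$ and $-\log(1-x)\le(2\log2)\,x$ for $x\in[0,\tfrac12]$. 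The first gives $\tfrac12 d(P,Q)\le-\log\rho(P,Q)$ unconditionally; the second shows that once $d(P,Q)<\infty$ --- so that $1-\rho(P_{k},Q_{k})\to0$, hence $\rho(P_{k},Q_{k})\ge\tfrac12$ for all but finitely many $k$ --- the tail of $-\log\rho(P,Q)$ is dominated by a multiple of the tail of $d(P,Q)$. This is the meaning of $d(P,Q)\propto-\log\rho(P,Q)$: the two are finite together and infinite together, and comparable up to multiplicative constants on their common tails.

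For (2) and (3), a direct computation on cylinders shows that $P\circ T^{n}$ is again a product measure, now with marginals $(P\circ T^{n})_{k}=P_{k-n}$, so that $\rho(P,P\circ T^{n})=\prod_{k}\rho(P_{k},P_{k-n})$ and $d(P,P\circ T^{n})=2\sum_{k}\bigl(1-\rho(P_{k},P_{k-n})\bigr)$. For (2): by the definition in the Preliminaries the shift is non-singular for $P$ iff $P\sim P\circ T$, which by the dichotomy means $\prod_{k}\rho(P_{k},P_{k-1})>0$, i.e. $\sum_{k}\bigl(1-\rho(P_{k},P_{k-1})\bigr)<\infty$, i.e. $d(P,P\circ T)<\infty$ by (1). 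For (3): by the Proposition the shift is NS zero-type iff $\rho(P,P\circ T^{n})\to0$ as $n\to\infty$, equivalently $-\log\rho(P,P\circ T^{n})\to\infty$; by (1) this holds iff $d(P,P\circ T^{n})\to\infty$.

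I expect the only genuinely delicate point to be pinning down the exact force of the symbol $\propto$ in (1): the ratio $-\log\rho(P_{k},Q_{k})/\bigl(1-\rho(P_{k},Q_{k})\bigr)$ is unbounded (it blows up as $\rho(P_{k},Q_{k})\to0$), so $\propto$ cannot mean a global two-sided bound --- it must be read as ``finite iff finite, divergent iff divergent'' with comparability on tails, and one should tacitly restrict to product measures whose one-dimensional marginals are non-degenerate, so that each $\rho(P_{k},Q_{k})>0$ and no single coordinate can make $P$ and $Q$ mutually singular independently of the others. Granting this, the remaining steps --- expanding the squares, the two calculus inequalities, and tracking the marginals of $P\circ T^{n}$ --- are routine, and (2), (3) then fall out of (1), the dichotomy, and the Proposition.
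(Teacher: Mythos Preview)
Your proposal is correct and follows precisely the route the paper indicates: the paper gives no detailed proof but states only that the lemma is a direct consequence of Kakutani's theorem on equivalence of product measures, and your argument --- expanding the Hellinger integral as a product, rewriting $d$ termwise as $2\sum_{k}\bigl(1-\rho(P_{k},Q_{k})\bigr)$, and invoking the Kakutani dichotomy together with the Proposition --- is exactly the intended unpacking. Your caution about the force of $\propto$ is well placed and matches how (1) is actually used in the sequel, namely only through the equivalences $d<\infty\iff\rho>0$ and $d\to\infty\iff\rho\to0$.
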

\begin{thm}
\begin{singlespace}
\label{thm:Let--be}Let $P={\displaystyle \prod_{k=-\infty}^{\infty}P_{k}}$
be a non-singular product measure. Then either there exists a shift
invariant $P$-equivalent probability or the shift $ $ $\left(\bbX,\cB(\bbX),P,T\right)$
is NS zero-type. Therefore a non-singular shift is either mixing in
the probability preserving sense or mixing in the non-singular sense. \end{singlespace}

\end{thm}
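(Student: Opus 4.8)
The plan is to run everything through Lemma~\ref{cla: Zero Type con for shifts}: the shift is NS zero-type precisely when $d(P,P\circ T^{n})\to\infty$, so if it is \emph{not} zero-type there are a constant $M<\infty$ and integers $n_{j}\uparrow\infty$ with $d(P,P\circ T^{n_{j}})\le M$, and the task is to manufacture a stationary probability equivalent to $P$. Write $p_{k}=P_{k}(\{1\})$; since consecutive marginals must be equivalent for the shift to be non-singular, either every $p_{k}\in\{0,1\}$ (then $P$ is a point mass and is already invariant, so we are done) or every $p_{k}\in(0,1)$, which we now assume. Encode $P_{k}$ by the point $v_{k}=(\sqrt{P_{k}(\{0\})},\sqrt{P_{k}(\{1\})})$ on the quarter--circle $C=\{x\in\RR^{2}:\ |x|=1,\ x_{1},x_{2}\ge0\}$. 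Because $(P\circ T^{n})_{k}=P_{k-n}$, one computes directly that $d(P,P\circ T^{n})=\sum_{k\in\bbZ}|v_{k}-v_{k-n}|^{2}=:H_{n}$, and non-singularity is exactly the statement $H_{1}<\infty$. I claim it then suffices to produce $v^{*}=(\sqrt{1-p^{*}},\sqrt{p^{*}})\in C$ with $p^{*}\in(0,1)$ and $\sum_{k}|v_{k}-v^{*}|^{2}<\infty$: indeed the i.i.d.\ product $Q=\prod_{k}\mathrm{Ber}(p^{*})$ then has marginals equivalent to those of $P$ and $\sum_{k}(1-\rho(P_{k},Q_{k}))=\tfrac12\sum_{k}|v_{k}-v^{*}|^{2}<\infty$, so $Q\sim P$ by Kakutani's dichotomy \cite{Kak}, while $Q$ is manifestly $T$--invariant.

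So the heart of the matter is: \emph{if $H_{1}<\infty$ and $\liminf_{n}H_{n}<\infty$, then $\sum_{k}|v_{k}-v^{*}|^{2}<\infty$ for some $v^{*}\in C$.} I would prove this spectrally. Set $a_{k}=v_{k}-v_{k-1}$, so $a=(a_{k})\in\ell^{2}(\bbZ,\RR^{2})$ with $\|a\|_{2}^{2}=H_{1}$; let $\hat a\in L^{2}(\mathbb{T},\mathbb{C}^{2})$ be its Fourier transform and $\mu=|\hat a(\theta)|^{2}\,\tfrac{d\theta}{2\pi}$ the associated finite absolutely continuous measure on $\mathbb{T}$. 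Since $v_{k}-v_{k-n}=a_{k}+a_{k-1}+\cdots+a_{k-n+1}$, Parseval gives
\[
H_{n}=\int_{\mathbb{T}}K_{n}(\theta)\,d\mu(\theta),\qquad K_{n}(\theta)=\Bigl|\sum_{m=0}^{n-1}e^{-im\theta}\Bigr|^{2}=\frac{\sin^{2}(n\theta/2)}{\sin^{2}(\theta/2)}.
\]
Now I average along the good subsequence. From $\sin^{2}(n_{j}\theta/2)=\tfrac12(1-\cos n_{j}\theta)$,
\[
\frac1J\sum_{j=1}^{J}K_{n_{j}}(\theta)=\frac{1}{2\sin^{2}(\theta/2)}\Bigl(1-\frac1J\,\mathrm{Re}\sum_{j=1}^{J}e^{in_{j}\theta}\Bigr),
\]
and since the $n_{j}$ are distinct integers, $\bigl\|\tfrac1J\sum_{j\le J}e^{in_{j}\theta}\bigr\|_{L^{2}(\mathbb{T})}^{2}=1/J\to0$, so along a subsequence $J_{i}$ the bracket on the right tends to $1$ for Lebesgue--a.e.\ $\theta$, hence $\mu$--a.e.\ $\theta$ because $\mu\ll d\theta$. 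By Fatou's lemma,
\[
\int_{\mathbb{T}}\frac{d\mu(\theta)}{2\sin^{2}(\theta/2)}\ \le\ \liminf_{i}\frac1{J_{i}}\sum_{j=1}^{J_{i}}H_{n_{j}}\ \le\ M<\infty.
\]
Equivalently $g:=\hat a/(1-e^{-i\theta})\in L^{2}(\mathbb{T},\mathbb{C}^{2})$; its Fourier coefficients $S=(S_{k})\in\ell^{2}(\bbZ,\RR^{2})$ satisfy $S_{k}-S_{k-1}=a_{k}=v_{k}-v_{k-1}$, so $v_{k}-S_{k}$ is a constant vector $v^{*}$. Then $v^{*}=\lim_{|k|\to\infty}v_{k}\in C$ and $\sum_{k}|v_{k}-v^{*}|^{2}=\|S\|_{2}^{2}<\infty$, which is exactly what was needed. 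Finally, the invariant equivalent probability $Q$ obtained when $p^{*}\in(0,1)$ is a genuine Bernoulli measure, hence mixing in the measure--preserving sense, giving the closing sentence of the statement.

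The step I expect to carry all the weight, and to be the main obstacle, is the Cesàro--Fatou estimate: the point that averaging the oscillatory Dirichlet--type kernels $K_{n_{j}}$ along \emph{any} sequence $n_{j}\to\infty$ of distinct integers produces, $\mu$--almost everywhere, one and the same kernel $(2\sin^{2}(\theta/2))^{-1}$, and that the $\mu$--integrability of this kernel is precisely the condition for the increment sequence $a$ to admit a square--summable antiderivative, i.e.\ for $v_{k}$ to converge square--summably. The remaining ingredients (the identity $d(P,P\circ T^{n})=H_{n}$, the reduction to $p_{k}\in(0,1)$, and the invocation of Kakutani's theorem) are routine bookkeeping; the one further point genuinely requiring care is the corner case $p^{*}\in\{0,1\}$ — equivalently $\sum_{k}\min(p_{k},1-p_{k})<\infty$, where $P$ is purely atomic — which must be excluded or handled separately, and is where conservativity (non--atomicity of $P$) enters.
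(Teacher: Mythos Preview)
Your argument is correct and takes a genuinely different route from the paper's. The paper proceeds by an elementary case split on the behaviour of $P_{k}(\{0\})$ as $k\to-\infty$: either the limit exists (Lemma~\ref{pro:with limit}), in which case one compares $P$ with the i.i.d.\ product $Q=\prod\mu_{p}$ and shows that $P\perp Q$ forces $d_{N}(P,Q)$, and hence $d(P,P\circ T^{n})$ for large $n$, to exceed any prescribed $M$; or the limit fails to exist (Lemma~\ref{pro:no limit}), and a pigeonhole count on the index sets $A_{q_{i}}=\{k:|P_{k}(\{0\})-q_{i}|<\alpha\}$ near two distinct subsequential limits $q_{1},q_{2}$, combined with the non-singularity condition $d(P,P\circ T)<\infty$, forces $d(P,P\circ T^{n})\to\infty$ directly. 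Your spectral argument replaces this dichotomy by a single Fourier computation: the identity $H_{n}=\int K_{n}\,d\mu$, Ces\`aro averaging along the bounded subsequence, and Fatou give $\int(2\sin^{2}(\theta/2))^{-1}\,d\mu<\infty$, which is exactly the condition for the increments $a_{k}=v_{k}-v_{k-1}$ to admit an $\ell^{2}$ antiderivative, so that $v_{k}\to v^{*}$ with $\sum_{k}|v_{k}-v^{*}|^{2}<\infty$, and Kakutani finishes. The paper's approach is shorter and entirely elementary; yours is more structural and delivers the two-sided square-summable convergence of $v_{k}$ in one stroke, whereas the paper only uses the one-sided limit at $-\infty$ as input. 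Both approaches share the boundary case $p^{*}\in\{0,1\}$, which you flag explicitly; the paper does not isolate it.
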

\begin{singlespace}
Theorem \ref{pro:NS zero type and mixing. } follows from lemmas \ref{pro:with limit}
and \ref{pro:no limit}. 
\end{singlespace}
\begin{lem}
\begin{singlespace}
\label{pro:with limit}Let $P$ be a non-singular product measure
on $\{0,1\}^{\mathbb{Z}}$ such that \[
\exists\lim_{k\to-\infty}P_{k}=(p,1-p):=\mu_{p}.\]
Denote by $Q={\displaystyle \prod_{k=-\infty}^{\infty}\mu_{p}}$.
Then if $P\perp Q$ then $ $ $\left(\bbX,\cB(\bbX),P,T\right)$ is
of NS-zero type. Else $Q$ is a $P$-equivalent shift-invariant probability
measure. \end{singlespace}
\end{lem}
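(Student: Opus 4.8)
The plan is to recast both $d(P,P\circ T^{n})$ and $d(P,Q)$ as $\ell^{2}$-type sums over the unit circle and then to split into two cases, the second of which carries all the work. For $k\in\bbZ$ write
\[
v_{k}:=\bigl(\sqrt{P_{k}(\{0\})},\ \sqrt{P_{k}(\{1\})}\bigr)\in\RR^{2},
\]
a point of the unit circle, and set $w:=\bigl(\sqrt{p},\ \sqrt{1-p}\bigr)$, so that $Q=\prod_{k}\mu_{p}$ is the i.i.d.\ product all of whose marginals correspond to $w$; the hypothesis of the lemma says precisely that $v_{k}\to w$ as $k\to-\infty$. A direct check on cylinders shows that $P\circ T^{n}$ is again a product measure with $k$-th marginal $P_{k-n}$, so by the definition of $d$ (and a shift of summation index),
\[
d\bigl(P,P\circ T^{n}\bigr)=\sum_{k\in\bbZ}\|v_{k}-v_{k-n}\|^{2}=\sum_{k\in\bbZ}\|v_{k}-v_{k+n}\|^{2},\qquad d(P,Q)=\sum_{k\in\bbZ}\|v_{k}-w\|^{2}.
\]
In particular $d(P,P\circ T^{n})=d(P,P\circ T^{-n})$, so it is enough to let $n\to+\infty$.

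Suppose first $P\not\perp Q$. Then $\rho(P,Q)>0$, hence $d(P,Q)<\infty$ by Lemma \ref{cla: Zero Type con for shifts}(1); since $P$ and $Q$ are product measures with mutually equivalent marginals (here we use $0<p<1$), Kakutani's theorem \cite{Kak} forces $P\sim Q$. As $Q$ is i.i.d.\ it is shift-invariant, and so it is the $P$-equivalent shift-invariant probability required in this case.

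Now suppose $P\perp Q$, so $d(P,Q)=\sum_{k}\|v_{k}-w\|^{2}=\infty$; by Lemma \ref{cla: Zero Type con for shifts}(3) it suffices to prove $d(P,P\circ T^{n})\to\infty$. The idea is that a large shift slides the coordinates on which $P$ is ``far from $Q$'' up against coordinates of the original $P$ that, by hypothesis, are already very close to $w$, so the infinite amount of singularity between $P$ and $Q$ is converted into singularity between $P$ and $P\circ T^{n}$. Quantitatively, fix $L>0$. For $\varepsilon>0$ put $S(\varepsilon):=\sum_{k\in\bbZ}\bigl(\|v_{k}-w\|-\varepsilon\bigr)_{+}^{2}$, where $x_{+}:=\max(x,0)$. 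Since $\bigl(\|v_{k}-w\|-\varepsilon\bigr)_{+}^{2}\uparrow\|v_{k}-w\|^{2}$ as $\varepsilon\downarrow0$, monotone convergence gives $S(\varepsilon)\uparrow\sum_{k}\|v_{k}-w\|^{2}=\infty$; hence we may fix $\varepsilon>0$ with $S(\varepsilon)>L$ and then, truncating this series of nonnegative terms, fix $K\in\bbN$ with $\sum_{|k|\le K}\bigl(\|v_{k}-w\|-\varepsilon\bigr)_{+}^{2}>L$. Using $v_{k}\to w$ as $k\to-\infty$, fix $M$ with $\|v_{k}-w\|<\varepsilon$ whenever $k\le-M$. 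For any $n\ge K+M$ and any $k$ with $|k|\le K$ we then have $k-n\le-M$, so $\|v_{k-n}-w\|<\varepsilon$, and the triangle inequality gives $\|v_{k}-v_{k-n}\|\ge\|v_{k}-w\|-\varepsilon$; together with $\|v_{k}-v_{k-n}\|\ge0$ this yields $\|v_{k}-v_{k-n}\|^{2}\ge\bigl(\|v_{k}-w\|-\varepsilon\bigr)_{+}^{2}$. Summing over $|k|\le K$,
\[
d\bigl(P,P\circ T^{n}\bigr)=\sum_{j\in\bbZ}\|v_{j}-v_{j-n}\|^{2}\ \ge\ \sum_{|k|\le K}\bigl(\|v_{k}-w\|-\varepsilon\bigr)_{+}^{2}\ >\ L.
\]
Since $L>0$ was arbitrary, $d(P,P\circ T^{n})\to\infty$, so by Lemma \ref{cla: Zero Type con for shifts}(3) the shift is NS zero-type.

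The main obstacle is the estimate in the last paragraph: one cannot compare $d(P,P\circ T^{n})$ with $d(P,Q)$ coordinate by coordinate, because $P\circ T^{n}$ is a shifted copy of $P$, not of $Q$, so the right device is the sliding-window lower bound above, and this in turn forces the order of the quantifiers (choose $\varepsilon$ only after $L$, and then $K$, $M$, $n$). Everything else --- the identification of the two distances with the $\ell^{2}$-sums, the appeal to Kakutani's theorem in the first case, and the passage through Lemma \ref{cla: Zero Type con for shifts}(3) --- is routine.
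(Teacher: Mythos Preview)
Your proof is correct and follows essentially the same route as the paper's: for $P\perp Q$, truncate to a finite window and use $P_{k-n}\to\mu_p$ as $n\to\infty$. The paper's version is a bit leaner: having chosen $N$ with $d_N(P,Q)>M$, it just notes that $d(P,P\circ T^n)\ge d_N(P,P\circ T^n)$, and since this truncated sum has only finitely many terms, each converging to the corresponding term of $d_N(P,Q)$, one obtains $\liminf_{n}d(P,P\circ T^n)\ge d_N(P,Q)>M$ directly---so the auxiliary $\varepsilon$ and the quantifier gymnastics you highlight in your last paragraph are not actually needed.
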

\begin{proof}
\begin{singlespace}
Assume that $P\perp Q$. Then by Kakutani's theorem \[
d\left(P,Q\right)=\infty.\]
By claim \ref{cla: Zero Type con for shifts} its enough to show that
${\displaystyle \lim_{n\to\infty}d\left(P,P\circ T^{n}\right)=\infty}$. 

Let $M>0$. Since $d(P,Q)=\infty$ there exists a $N\in\mathbb{N}$
such that \[
d_{N}(P,Q)>M.\]
For every $n\in\mathbb{N}$,\begin{eqnarray*}
d\left(P,P\circ T^{n}\right) & \geq & \sum_{k=-N}^{N}\left\{ \left(\sqrt{P_{k}\left(\left\{ 0\right\} \right)}-\sqrt{P_{k-n}\left(\left\{ 0\right\} \right)}\right)^{2}\right.\\
 &  & \left.+\left(\sqrt{P_{k}\left(\left\{ 1\right\} \right)}-\sqrt{P_{k-n}\left(\left\{ 1\right\} \right)}\right)^{2}\right\} \end{eqnarray*}
Therefore since ${\displaystyle \lim_{j\to-\infty}}P_{j}=\mu_{p}$
then \begin{eqnarray*}
\liminf_{n\to\infty}d(P,P\circ T^{n}) & \geq & d_{N}(P,Q)\\
\geq & M.\end{eqnarray*}
Since $M$ is arbitrary then\[
\lim_{n\to\infty}d\left(P,P\circ T^{n}\right)=\infty.\]
\end{singlespace}
\end{proof}
\begin{lem}
\begin{singlespace}
\label{pro:no limit}Let $ $$P$ be a non-singular product measure
on $\{0,1\}^{\mathbb{Z}}$ such that \[
\liminf_{k\to-\infty}P_{k}\left(\left\{ 0\right\} \right)\neq\lim\sup_{k\to-\infty}P_{k}\left(\left\{ 0\right\} \right).\]
Then $\left(\bbX,\cB(\bbX),P,T\right)$ is NS zero-type. \end{singlespace}
\end{lem}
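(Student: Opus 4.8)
The plan is to use part (3) of Lemma~\ref{cla: Zero Type con for shifts}: it suffices to prove that $d\bigl(P,P\circ T^{n}\bigr)\to\infty$ as $n\to\infty$. Write $p_{k}:=P_{k}(\{0\})$. Since $\bigl(P\circ T^{n}\bigr)_{k}=P_{k-n}$, by the definition of $d$ we have
\[
d\bigl(P,P\circ T^{n}\bigr)=\sum_{k\in\bbZ}\Bigl\{\bigl(\sqrt{p_{k}}-\sqrt{p_{k-n}}\bigr)^{2}+\bigl(\sqrt{1-p_{k}}-\sqrt{1-p_{k-n}}\bigr)^{2}\Bigr\},
\]
and from $(\sqrt{s}-\sqrt{t})^{2}=(s-t)^{2}/(\sqrt{s}+\sqrt{t})^{2}\ge(s-t)^{2}/4$ for $s,t\in[0,1]$, the whole problem reduces to the real–sequence statement: if $\sum_{k}(p_{k}-p_{k-1})^{2}<\infty$ and $\liminf_{k\to-\infty}p_{k}<\limsup_{k\to-\infty}p_{k}$, then $\sum_{k}(p_{k}-p_{k-n})^{2}\to\infty$. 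The $\ell^{2}$ hypothesis on the increments is exactly non–singularity, by part (2) of Lemma~\ref{cla: Zero Type con for shifts} and \eqref{eq: condition for zero type 1.}.

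Next I would extract the two structural ingredients. From $\sum_{k}(p_{k}-p_{k-1})^{2}<\infty$ one gets $p_{k}-p_{k-1}\to0$ as $k\to-\infty$, so far to the left the sequence moves arbitrarily slowly; fixing $\alpha,\beta$ with $a:=\liminf_{k\to-\infty}p_{k}<\alpha<\beta<b:=\limsup_{k\to-\infty}p_{k}$, the sequence lies below $\alpha$ infinitely often and above $\beta$ infinitely often as $k\to-\infty$. Slow motion together with this oscillation forces infinitely many ``crossings'' of the band $[\alpha,\beta]$ far to the left, and given any $L$ one can go far enough left that each step there has size $<(\beta-\alpha)/L$, so every such crossing occupies at least $L$ consecutive indices.

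Now fix a large $n$. I would cut a crossing lying far to the left into consecutive blocks of length $n$; the telescoping identity $\sum_{i}\bigl(p_{e_{i}}-p_{e_{i}-n}\bigr)=p_{e_{m}}-p_{e_{0}}\ge\beta-\alpha$ over the block endpoints $e_{i}$ (using $e_{i-1}=e_{i}-n$) shows that a crossing of length $\approx mn$ yields $m$ shift–differences summing to at least $\beta-\alpha$. Choosing $\nu$ such crossings far enough to the left and noting that at most $\sum_{k}(p_{k}-p_{k-n})^{2}/\lambda^{2}$ of the indices $k$ can have $|p_{k}-p_{k-n}|>\lambda$, all but a bounded number of the crossings are ``clean'' (all their block–increments $\le\lambda$), and each clean crossing forces a definite number of indices with $p_{k},p_{k-n}$ a fixed distance apart; letting $\nu\to\infty$ gives the divergence. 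Equivalently, in contrapositive form: if $\sum_{k}(p_{k}-p_{k-n_{j}})^{2}\le C$ along some $n_{j}\to\infty$, then outside a set of $k$ of cardinality bounded independently of $j$ one has $|p_{k}-p_{k-n_{j}}|$ small, and chaining this along the long crossings contradicts $a<b$.

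The delicate point — and where the argument must be run carefully — is the contribution of \emph{wide} crossings: a crossing of length much larger than $n$ that is essentially monotone produces only tiny shift–differences, of size $\approx(\beta-\alpha)n/(\text{length})$, whose squares contribute very little, so one cannot simply charge a fixed amount to each crossing and must instead track the accumulated small differences coming from slowly varying stretches (in Fourier terms: that the square function of $(p_{k})$ at scale $n$, namely $\int|\widehat{\Delta}(\xi)|^{2}\,\sin^{2}(\pi n\xi)/\sin^{2}(\pi\xi)\,d\xi$ with $\Delta_{k}=p_{k}-p_{k-1}$, cannot stay bounded). Making this uniform in $n$, so as to obtain a genuine limit rather than merely a $\liminf$, I would organise the count by the lengths of the crossings relative to $n$ — those of length $\lesssim n$, those of length between $n$ and $n^{2}$, and those $\gg n$ (where one exploits the ``straddle'' indices $k$ for which $p_{k}$ is near a peak while $p_{k-n}$ sweeps across a neighbouring half–oscillation).
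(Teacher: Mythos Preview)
Your proposal is incomplete, and you say so yourself: after isolating the ``delicate point'' --- that a slowly traversed crossing of length $L\gg n$ contributes only about $n^{2}(\beta-\alpha)^{2}/L$ to $\sum_{k}(p_{k}-p_{k-n})^{2}$ --- you offer only a programme (sorting crossings by their length relative to $n$, a Fourier square-function heuristic, a contrapositive chaining argument) rather than an argument. Nothing you have written produces a lower bound on $d(P,P\circ T^{n})$ that tends to infinity with $n$; the wide-crossing issue is precisely the obstruction, and you have not resolved it.

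More to the point, all of this machinery is unnecessary. You already have the two ingredients the paper uses --- slow motion ($p_{k}-p_{k-1}\to 0$ as $k\to-\infty$, from non-singularity) and oscillation --- but you deploy them in the wrong place. Instead of chasing infinitely many crossings far to the left, fix once and for all a \emph{bounded} window $[-N,N]$ containing many indices $k$ with $|p_{k}-q_{1}|<\alpha$ and many with $|p_{k}-q_{2}|<\alpha$, where $q_{1}=\liminf_{k\to-\infty}p_{k}$, $q_{2}=\limsup_{k\to-\infty}p_{k}$ and $\alpha=(q_{2}-q_{1})/4$. The slow-motion property gives, for each fixed $j\in[-N,N]$, $|p_{-n}-p_{-n+j}|\to 0$ as $n\to\infty$; hence for all large $n$ the $2N+1$ numbers $\{p_{k-n}:\ |k|\le N\}$ form a cluster of diameter smaller than $2\alpha$. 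Such a cluster cannot meet both the $\alpha$-neighbourhood of $q_{1}$ and that of $q_{2}$ (since $|q_{1}-q_{2}|=4\alpha$), so for one of the two index-sets in $[-N,N]$ every $k$ has $p_{k}$ near $q_{i}$ while $p_{k-n}$ is bounded away from $q_{i}$. That already forces $d_{N}(P,P\circ T^{n})$ to exceed any preassigned $M$ once $N$ is large enough, for all sufficiently large $n$. The entire analysis of wide versus narrow crossings, telescoping over block endpoints, and scale-dependent counts disappears: the comparison happens on a \emph{fixed} finite window, across which the shifted coordinates become asymptotically constant.
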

\begin{proof}
\begin{singlespace}
Write $q_{1}={\displaystyle \lim\inf_{k\to-\infty}P_{k}(\{0\})}$
and $q_{2}={\displaystyle \limsup_{k\to-\infty}P_{k}(\{0\})}$.

Let $M>0$. Set $\alpha=\frac{q_{2}-q_{1}}{4}.$ Define \[
A_{q_{i}}:=\left\{ n\in\mathbb{Z}:\ \left|P_{k}(\{0\})-q_{i}\right|<\alpha\right\} ,\ i=1,2.\]
Let $A_{q_{i}}^{N}=A_{q_{i}}\cap[-N,N]$. 

Choose $N$ large enough so that \[
\left|A_{q}^{N}\right|\geq\frac{M}{\alpha}\ {\rm \ and\ }\left|A_{p}^{N}\right|\geq\frac{M}{\alpha}.\]
Since $d(P,P\circ T)<\infty$ then for every $j\in\mathbb{Z}\cap[-N,N]$,\[
\lim_{n\to\infty}\left|P_{-n}(\{0\})-P_{-n+j}(\{0\})\right|=0.\]
Therefore for large enough $n\in\bbN$ either \[
\left[-N-n,N-n\right]\cap A_{q_{1}}=\emptyset\]
or \[
[-N-n,N-n]\cap A_{q_{2}}=\emptyset.\]
Therefore for large enough $n\in\mathbb{N}$, \begin{eqnarray*}
d\left(P,P\circ T^{n}\right) & \geq & d_{N}(P,Q)\\
 & \geq & \sum_{k\in A_{q_{1}}^{N}}\left(\sqrt{P_{k}\left(\left\{ 0\right\} \right)}-\sqrt{P_{k-n}\left(\left\{ 0\right\} \right)}\right)^{2}\\
 &  & +\sum_{k\in A_{q_{2}}^{N}}\left(\sqrt{P_{k}\left(\left\{ 0\right\} \right)}-\sqrt{P_{k-n}\left(\left\{ 0\right\} \right)}\right)^{2}\\
 & \geq & \min\left(\alpha\cdot\left|A_{q_{i}}^{N}\right|,\alpha\cdot\left|A_{q_{2}}^{N}\right|\right)\geq M.\end{eqnarray*}
Therefore \[
\lim_{n\to\infty}d\left(P,P\circ T^{n}\right)=\infty\]
and the shift is NS zero-type. \end{singlespace}

\end{proof}
\begin{singlespace}

\section{A zero type and power weakly mixing Bernoulli shift}
\end{singlespace}
\begin{lyxcode}
\begin{singlespace}
\end{singlespace}

\end{lyxcode}
\begin{singlespace}
In this section we construct a Bernoulli shift which is zero type
and power weak mixing. The construction is done by imposing a stronger
growth condition on the shift constructed in \cite{Kos}. 

Construction of the product measure: 

The product measure will be $P={\displaystyle \prod_{k=-\infty}^{\infty}}P_{k}$
, where \begin{equation}
\forall i\geq0,\ P_{i}\left(0\right)=P_{i}\left(1\right)=\frac{1}{2}.\label{P on N+}\end{equation}

The definition of $P_{k}$ for negative $k$'s is more complicated
as it involves an inductive procedure. 
\end{singlespace}

\subsection{The inductive definition of $P_{k}$ for negative $k's$.}

\begin{singlespace}
We will need to define inductively 5 sequences $\left\{ \lambda_{t}\right\} _{t=1}^{\infty},\left\{ n_{t}\right\} _{t=1}^{\infty}$
, $\left\{ m_{t}\right\} _{t=1}^{\infty},\ \left\{ M_{t}\right\} _{t=0}^{\infty}$
and $\left\{ N_{t}\right\} _{t=1}^{\infty}$ . The sequence $\left\{ \lambda_{t}\right\} $
is of real numbers which decreases to $1$. The other four, $\left\{ n_{t}\right\} _{t=1}^{\infty}$
, $\left\{ m_{t}\right\} _{t=1}^{\infty},\ \left\{ M_{t}\right\} _{t=0}^{\infty}$
and $\left\{ N_{t}\right\} _{t=1}^{\infty}$ are increasing sequences
of integers. \medskip{}

First choose a positive summable sequence $\left\{ \epsilon_{t}\right\} _{t=1}^{\infty}$
and set $M_{0}=1$. 

\underbar{Base of the induction}: Set $\lambda_{1}=2$ , $n_{1}=2$
, $m_{1}=4$ . Set also $N_{1}=M_{0}+n_{1}=3$ and $M_{1}=N_{1}+m_{1}=7.$
$ $

Given $\left\{ \lambda_{u},n_{u},N_{u},m_{u},M_{u}\right\} _{u=1}^{t-1}$
, we will choose the next level $\left\{ \lambda_{t},n_{t},N_{t},m_{t},M_{t}\right\} $
in the following order. First we choose $\lambda_{t}$ depending on
$M_{t-1}$ and $\epsilon_{t}$. Given $\lambda_{t}$ we will choose
$n_{t}$ and then $N_{t}$ will be defined by \[
N_{t}:=M_{t-1}+n_{t}.\]
Then given $N_{t}$ we will choose $m_{t}$ and finally set \[
M_{t}:=N_{t}+m_{t}.\]
\underbar{Choice of $\lambda_{t}$}: Set $k_{t}:=\left\lfloor \log_{2}\left(\frac{M_{t-1}}{\epsilon_{t}}\right)\right\rfloor +1$
, where $\left\lfloor x\right\rfloor $ denotes the integral part
of $x$. Then set $\lambda_{t}=e^{\frac{1}{2^{k_{t}}}}$. With this
choice of $\lambda_{t}$ we have,\begin{equation}
\lambda_{t}^{M_{t-1}}<e^{\epsilon_{t}}.\label{Constraint on lambda(t)}\end{equation}
 This choice of $\lambda_{t}$ has the property that for every $u<t$,
$\lambda_{u}=\lambda_{t}^{2^{k_{t}-k_{u}}}.$

Define \[
A_{t-1}:=\left\{ \prod_{u=1}^{t-1}\lambda_{u}^{x_{u}}:\ x_{u}\in\left[-n_{u},n_{u}\right]\right\} .\]

\underbar{Choice of $n_{t}$}: Given $\{\lambda_{u},n_{u},m_{u}\}_{u=1}^{t-1}$
and $\lambda_{t}$, the set $A_{t-1}$ is a finite subset of $\lambda_{t}^{\bbZ}$.
Choose $n_{t}$ which satisfies

\[
\lambda_{t}^{n_{t}/4}\geq\max\left\{ a^{2}:a\in A_{t-1}\right\} .\]
\underbar{Choice of $m_{t}$}: Now that $n_{t}$ is chosen we set
$N_{t}=M_{t-1}+n_{t}$ . Set \begin{equation}
tN_{t}\left(2+2^{tN_{t}}\right)=m_{t}.\label{constraint of m(t)}\end{equation}

\end{singlespace}
\begin{rem*}
\begin{singlespace}
Since $m_{t}$ satisfies \ref{constraint of m(t)} then for every
$k<t$ and $n\leq N_{t}$, \begin{equation}
\frac{m_{t}}{n}-N_{t}>2^{kN_{t}}\label{eq:The growth condition necessary for p.w.m}\end{equation}
\end{singlespace}
\end{rem*}
\begin{defn}
\begin{singlespace}
Let $\left(X,\mu,T\right)$ be a non singular automorphism such that
$\cB(X)\neq\left\{ \emptyset,X\right\} $ and let $\mathcal{F\subset B}$
be a factor algebra. Then:

(i) $\mathcal{F}$ is \textit{exhaustive} if ${\displaystyle \vee_{n=0}^{\infty}T^{n}\mathcal{F=\cB}.}$

(ii) $\mathcal{F}$ is \textit{exact} if ${\displaystyle \cap_{n=0}^{\infty}T^{n}\mathcal{F=}\left\{ \emptyset,X\right\} .}$

(iii) $T$ is \textit{a K-automorphism} if it is conservative and
admits a factor algebra $\mathcal{F\subset B}$ that is exhaustive
and exact and such that $T'$ is $\mathcal{F}$ measurable. \end{singlespace}
\end{defn}
\begin{rem*}
\begin{singlespace}
Krengel has shown in \cite[p. 153-154]{Kre} that all $K$-automorphisms
are ergodic. See also \cite[Proposition 4.8(a)]{S-T}. \end{singlespace}
\end{rem*}
\begin{thm}
\begin{singlespace}
The Bernoulli shift $\left(\bbX,\mathcal{B}\left(\bbX\right),P,T\right)$
is a non-singular, type ${\rm III}_{1}$, zero-type and power weakly
mixing transformation. \end{singlespace}
\end{thm}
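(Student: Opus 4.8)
The plan is to establish the four asserted properties in turn, with the bulk of the work going into power weak mixing; non-singularity, the type ${\rm III}_1$ property, and zero-type are either quick consequences of the construction or of results already in the paper. First I would verify \emph{non-singularity}: by Lemma \ref{cla: Zero Type con for shifts}(2) it suffices to check $d(P,P\circ T)<\infty$, and since $P_k=\mu_{1/2}$ for $k\geq 0$, only the negative coordinates contribute; the constraint \eqref{Constraint on lambda(t)} (namely $\lambda_t^{M_{t-1}}<e^{\epsilon_t}$ with $\{\epsilon_t\}$ summable) is exactly what forces the Hellinger distances between consecutive coordinates to sum. Then \emph{zero-type} is immediate once power weak mixing (hence ergodicity of $T$, hence no finite invariant measure equivalent to $P$, together with the fact that $P$ is genuinely not equivalent to a shift-invariant product — which follows because $\liminf$ and $\limsup$ of $P_k(\{0\})$ as $k\to-\infty$ differ by construction, the $\lambda_t$-blocks pushing ratios away from $1$) is known: apply Theorem \ref{thm:Let--be}, which says a non-singular shift that is not equivalent to an invariant product probability is NS zero-type. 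The type ${\rm III}_1$ claim I would obtain by identifying the Maharam/ratio set: the Radon--Nikodym cocycle takes values in the multiplicative group generated by the $\lambda_t$'s and $1$ (the $\frac12$'s cancel), and since $\lambda_t\downarrow 1$ the closed subgroup of $\RR_{>0}$ generated by the essential range is all of $\RR_{>0}$; the argument here parallels \cite{Kos} and I would cite it, checking that the stronger growth conditions do not disturb the ratio-set computation.

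The heart of the theorem is \emph{power weak mixing}: for arbitrary $l_1,\dots,l_k\in\bbZ\setminus\{0\}$ the product $T^{l_1}\times\cdots\times T^{l_k}$ on $(\bbX^k,P^{\otimes k})$ is ergodic. The strategy I would follow is the $K$-automorphism criterion in the Definition just before the statement, combined with Krengel's theorem (quoted in the Remark) that $K$-automorphisms are ergodic. Concretely, for the product system one takes the factor $\sigma$-algebra $\cF = \bigotimes_{j=1}^k \cF^{(j)}$, where $\cF^{(j)}$ is the $\sigma$-algebra on the $j$-th copy generated by coordinates with index $\geq$ (some cutoff), chosen so that $(T^{l_j})'$ is $\cF^{(j)}$-measurable and $\cF^{(j)}$ is exhaustive and exact for $T^{l_j}$. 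One must check: (a) conservativity of the product — this is where the zero-type/ratio-set machinery and the growth conditions re-enter, since one needs $\sum (T^{-n}_{\text{product}})' = \infty$ a.e., i.e. the product cocycle of the $k$ maps with powers $l_j$ does not escape to $0$; (b) exactness of the product factor, which reduces to exactness of each $\cF^{(j)}$ under $T^{l_j}$; and (c) $\cF$-measurability of the product Radon--Nikodym derivative, which is automatic from the one-dimensional statements. The delicate point is (a): unlike a single shift where conservativity follows from Hopf's theorem once the divergence $\sum(T^{-n})'=\infty$ holds, for the $k$-fold product with distinct powers $l_j$ one needs the block structure — the $n_t$-blocks (where $\lambda_t$ appears) and the $m_t$-blocks (where $P$ is uniform) — to be arranged so that along a suitable subsequence of times the joint Radon--Nikodym derivatives of $(T^{l_1},\dots,T^{l_k})$ return close to $1$ simultaneously on a set of positive measure. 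This is precisely what the inequality \eqref{eq:The growth condition necessary for p.w.m}, $\frac{m_t}{n} - N_t > 2^{kN_t}$ for $k<t$ and $n\leq N_t$, is engineered to guarantee: the uniform $m_t$-blocks are long enough, relative to everything built before stage $t$, that one can slide any fixed collection of shifts $T^{l_j}$ into a regime where all $k$ cocycles are trivial on a large cylinder.

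So the order of operations would be: (1) non-singularity via \eqref{Constraint on lambda(t)} and Lemma \ref{cla: Zero Type con for shifts}; (2) identify the exhaustive exact factor $\cF^{(j)}$ for each power $T^{l_j}$ acting on a single copy of $\bbX$ — here exhaustiveness $\bigvee_n T^n\cF=\cB$ and exactness $\bigcap_n T^n\cF=\{\emptyset,\bbX\}$ for the (powers of the) Bernoulli shift are standard, using that the tail of a product measure is trivial; (3) form the product factor on $\bbX^k$ and verify exactness and cocycle-measurability coordinatewise; (4) prove conservativity of $T^{l_1}\times\cdots\times T^{l_k}$ using the growth condition \eqref{eq:The growth condition necessary for p.w.m} and the choice of $n_t$ (the inequality $\lambda_t^{n_t/4}\geq\max\{a^2:a\in A_{t-1}\}$) to show $\sum_n (T^{-n}\times\cdots)' = \infty$ a.e.; (5) invoke the $K$-automorphism definition and Krengel's theorem to conclude ergodicity of the product, giving power weak mixing; (6) deduce zero-type from Theorem \ref{thm:Let--be} together with the already-established fact that $P$ is not equivalent to a shift-invariant product probability; (7) compute the ratio set as in \cite{Kos} to get type ${\rm III}_1$. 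I expect step (4) — the conservativity of the general $k$-fold product with arbitrary nonzero powers — to be the main obstacle, since it is the only place where the $k$ powers genuinely interact and where the full strength of the inductive growth conditions \eqref{constraint of m(t)}--\eqref{eq:The growth condition necessary for p.w.m} is needed; everything else is either a citation or a routine tail-triviality argument.
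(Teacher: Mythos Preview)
Your overall plan for power weak mixing matches the paper: reduce ergodicity of $S=T^{l_1}\times\cdots\times T^{l_k}$ to conservativity via the $K$-automorphism criterion, then prove conservativity by showing $\sum_n S^{n'}=\infty$ everywhere using the growth condition \eqref{eq:The growth condition necessary for p.w.m}. Two points need correction, however.

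\medskip
\textbf{Zero-type.} Your route here fails on both branches. The implication ``ergodicity of $T$ $\Rightarrow$ no equivalent finite invariant measure'' is simply false (any ergodic probability-preserving system is a counterexample), and your backup claim that $\liminf_{k\to-\infty}P_k(\{0\})\neq\limsup_{k\to-\infty}P_k(\{0\})$ is also false for this construction: since $\lambda_t\downarrow 1$, the marginals $P_k(\{0\})$ converge to $\tfrac12$ as $k\to-\infty$, so Lemma \ref{pro:no limit} does not apply. The paper instead establishes type ${\rm III}_1$ \emph{first}: it cites \cite{Kos} for type ${\rm III}_1$, which in particular rules out any equivalent $\sigma$-finite invariant measure and hence any equivalent invariant product probability; \emph{then} Theorem \ref{thm:Let--be} forces zero-type. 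You should swap your steps (6) and (7).

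\medskip
\textbf{Conservativity mechanism.} Your description of step (4) is off in a way that matters. The cocycles do \emph{not} become ``trivial'' or ``close to $1$'' along the relevant times, nor is this a ``positive-measure set'' argument. What one actually has (via \cite[Lemma 3]{Kos}) is a uniform pointwise lower bound $(T^n)'(w)\geq 2^{-N_t-1/k}$ valid for \emph{every} $w\in\bbX$ whenever $|n|\in[N_t,m_t)$, hence $S^{n'}(\tilde w)\geq 2^{-kN_t-1}$ on all of $\bbX^k$ for $N_t\leq n\leq m_t/L$, where $L=\max_i|l_i|$. The role of \eqref{eq:The growth condition necessary for p.w.m} is that this interval has length exceeding $2^{kN_t}$, so the block sum already contributes at least $\tfrac12$; summing over $t$ gives divergence and Hopf's theorem finishes. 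The choice of $n_t$ via the set $A_{t-1}$ (your inequality $\lambda_t^{n_t/4}\geq\max\{a^2:a\in A_{t-1}\}$) is used for the ratio-set computation in \cite{Kos}, not for conservativity here.
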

\begin{proof}
\begin{singlespace}
In \cite{Kos} it is shown that the shift is a type ${\rm III}_{1}$
transformation. Therefore by Theorem \ref{thm:Let--be} the shift
is a zero type transformation. It remains to show that the shift is
power weak mixing. 

Let $l_{1},l_{2},...,l_{k}\in\bbZ\backslash\{0\}$ and denote by $S:=T^{l_{1}}\times T^{l_{2}}\times\cdots\times T^{l_{k}}$.
Clearly \begin{eqnarray*}
S^{n'}\left(w_{1},w_{2},..,w_{k}\right) & = & \prod_{i=1}^{k}T^{\left(l_{i}n\right)'}\left(w_{i}\right)\end{eqnarray*}
and $\left(\bbX^{k},P^{\times k},S\right)$ admits an exhaustive and
exact factor. Therefore in order to prove the ergodicity of $S$ it
is sufficient to show that $S$ is conservative. 

By \cite[lemma 3]{Kos} and a similar calculation for negative $n's$
there exists $t_{0}\in\bbN$ such that for every $t>t_{0}$ , $|n|\in\left[N_{t},m_{t}\right)$
and $w\in\bbX$\[
T^{n'}(w)\geq\sqrt[k]{\frac{1}{2}}\prod_{u=1}^{t}\lambda_{u}^{\sum_{j=-N_{u}+1}^{-M_{u-1}}\left\{ w_{k+n}-w_{k}\right\} }\geq2^{-N_{t}-1/k}.\]
Here the last inequality follows from $\lambda_{u}<\lambda_{1}=2$. 

Let $L=\max\left\{ \left|l_{i}\right|:1\leq i\leq k\right\} .$Then
for every $t>\max\left(t_{0},L\right)$ , $i\in\{1,..,k\}$ and $N_{t}\leq n\leq\frac{m_{t}}{L}$,
\[
T^{\left(l_{i}n\right)'}(w)\geq2^{-N_{t}-1/k}\]
and so \[
S^{n'}\left(w_{1},w_{2},..,w_{k}\right)\geq2^{-kN_{t}-1}.\]
Which together with \eqref{eq:The growth condition necessary for p.w.m}
implies that every $\tilde{w}\in\bbX^{k}$ and $t>\max\left(t_{0},k,L\right)$
\[
\sum_{n=N_{t}}^{m_{t}/L}S^{n'}(\tilde{w})\geq\left(\frac{m_{t}}{L}-N_{t}\right)2^{-kN_{t}-1}\geq\frac{1}{2}.\]
Therefore for every $ $$\tilde{w}\in\bbX^{k}$, \[
\sum_{n=1}^{\infty}S^{n'}\left(\tilde{w}\right)\geq\sum_{t}\sum_{n=N_{t}}^{m_{t}/L}S^{n'}(\tilde{w})=\infty.\]
By Hopf's theorem for non-singular transformations $S$ is conservative. \end{singlespace}

\end{proof}
\begin{singlespace}
The next example is a continuous time flow such that all the times
are zero type and power weakly mixing. In this Markov Chain example
the flow preserves an infinite measure. 
\end{singlespace}

\subsection{The Markov Chain Example.}

A Borel map $X\times\RR\ni(x,t)\mapsto\phi_{t}(x)$ such that \[
\phi_{t}\phi_{s}=\phi_{t+s}\]
 is called a \textit{non-singular flow }on $(X,\cB)$. Given a measure
$\mu$ on $X$, the semi-flow $\left\{ \phi_{t}\right\} _{t\in[0,\infty)}$
is called exact if \[
\cap_{t\geq0}\phi_{t}^{-1}\cB=\left\{ \emptyset,X\right\} mod\mu.\]

$ $

\begin{singlespace}
A measure preserving flow $\left(X,\mathcal{B},\mu,\left\{ \phi_{t}\right\} \right)$
is a \textit{K-flow} if it admits an exhaustive and exact factor.
Clearly a natural extension of an exact semiflow is a K-flow. 
\end{singlespace}

The flow is called\textit{ Power Weakly Mixing} if for every $t_{1},t_{2},..,t_{n}\in\RR$,
\[
\phi_{t_{1}}\times\phi_{t_{2}}\times\cdots\phi_{t_{n}}\]
is an ergodic transformation of $\left(\left(S^{\RR}\right)^{k},\cB^{\otimes k},\mu^{\otimes k}\right)$. 

A function $p:[0,\infty)\to[0,1]$ is a \textit{Markovian Renewal
Function} if there exists a countable state space, which will be denoted
by $S$, Markov Chain $\left\{ X_{t}\right\} _{t\in[0,\infty)}$ and
a state $a\in S$ such that \[
P_{a,a}(t):=P\left(\left.X_{t}=a\right|X_{0}=a\right)=p(t).\]

We say that $p$ is \textit{aperiodic} if \[
\gcd\left\{ n\in\mathbb{N}:\ p(n)\neq0\right\} =1,\]
and \textit{null recurrent} if \[
\sum_{n=1}^{\infty}p(n)=\infty\ and\ p(n)\xrightarrow[n\to\infty]{}0.\]
Given a renewal function $p$ the sequence $\left\{ p(n)\right\} _{n\in\mathbb{N}}$
defines a renewal sequence for the discrete time Markov Chain $\left\{ X_{n}\right\} _{n=0}^{\infty}$.
Thus if $p$ is aperiodic and null recurrent then the Markov chain
$\left\{ X_{n}\right\} _{n=0}^{\infty}$ is aperiodic and null recurrent.
Hence there exists a stationary ($\sigma$-finite) measure $\tilde{\mu}\in\mathcal{M}(S)$.
It follows that the measure \[
\mu(\{a\})=\int_{0}^{1}\left(\sum_{s\in S}P_{s,a}(t)\tilde{\mu}(\{s\})\right)dt\]
is a stationary measure for $\left(P_{t}\right)_{t\in\RR}=\left(\left\{ P_{a_{1},a_{2}}(t)\right\} _{a_{1},a_{2}\in S}\right)_{t\in\RR}$.
Finally let $\nu=P^{\mu}$ be the measure on $S^{\RR}$ with finite
dimensional distributions \[
\nu\left[x_{t_{0}}=s_{0},x_{t_{1}}=s_{1},...,x_{t_{n}}=s_{n}\right]=\mu\left(\left\{ s_{0}\right\} \right)P_{s_{0},s_{1}}\left(t_{1}-t_{0}\right)\cdots P_{s_{n-1},s_{n}}\left(t_{n}-t_{n-1}\right),\]
for every $t_{0}<t_{1}<\cdots<t_{n}$ and $s_{0},s_{1},\ldots,s_{n}\in S$.
The flow $\left\{ \phi_{t}\right\} _{t\in\mathbb{R}}$ on $S^{\RR}$
defined by\[
\phi_{t}w(s)=w(s+t)\]
 is $\nu$- measure preserving. It is the natural extension of the
semiflow $\left\{ \phi_{t}\right\} _{t\in[0,\infty)}$. 
\begin{thm}
\label{thm:Power weakly mixing flow}Let $p:[0,\infty)\to[0,1]$ be
an aperiodic and null recurrent Markov Renewal Function then the flow
$\left(S^{\RR},\mathcal{B},\nu,\left\{ \phi_{t}\right\} _{t\in\RR}\right)$
is conservative, exact and zero-type. If in addition for every $t_{1},t_{2},..,t_{n}\in\RR_{+}$,
\[
\sum_{n=1}^{\infty}\prod_{j=1}^{k}p_{a,a}^{(n)}(t_{j})=\infty\]
then the flow is Power Weakly Mixing. \end{thm}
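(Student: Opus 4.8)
The plan is to establish the three basic properties of the flow --- conservativity, exactness, zero-type, which use only that $p$ is aperiodic and null recurrent --- and then to obtain power weak mixing by imitating the proof of Theorem~\ref{thm:Let--be}: reduce the ergodicity of a product of time-$t$ maps to its conservativity via the $K$-property, and verify conservativity by a Hopf-type return criterion. For conservativity, null recurrence gives $\sum_{n}p(n)=\infty$, so the embedded discrete chain returns to $a$ at infinitely many integer times from $\nu$-a.e.\ point of $[x_{0}=a]$; thus the finite-measure cylinder $[x_{0}=a]$ is a conservative set for the flow, and it sweeps out since every coordinate chain visits $a$, so the flow is conservative by Hopf's decomposition. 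For exactness of the one-sided semiflow $\{\phi_{t}\}_{t\ge0}$ on $S^{[0,\infty)}$ one has $\phi_{t}^{-1}\cB=\sigma(x_{v}:v\ge t)$, so $\bigcap_{t\ge0}\phi_{t}^{-1}\cB$ is the tail $\sigma$-algebra of the continuous-time chain; this is trivial because an aperiodic recurrent chain satisfies $\|\mu_{1}P_{u}-\mu_{2}P_{u}\|_{\mathrm{TV}}\to0$ for all initial laws (Orey's theorem). Hence the semiflow is exact and the flow on $S^{\RR}$, being its natural extension, is a $K$-flow.

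For zero-type note that $\mu(S)=\infty$, so zero-type is the Hajian--Kakutani/Krengel--Sucheston condition, and it suffices to prove $\nu(A\cap\phi_{-t}A)\to0$ as $t\to\infty$ for $A$ in the dense family of cylinders. If $A$ prescribes states $s_{0},\dots,s_{m}$ at times $\tau_{0}<\cdots<\tau_{m}$, then for $t$ large the set $A\cap\phi_{-t}A$ forces, besides the two separate blocks, a transition of the chain from $s_{m}$ at time $\tau_{m}$ to $s_{0}$ at time $\tau_{0}+t$, whose probability is $P_{s_{m},s_{0}}(t+\tau_{0}-\tau_{m})$. A null recurrent chain has $P_{x,y}(u)\to0$ for all $x,y$ --- a consequence of $p(u)\to0$ together with a renewal estimate --- so $\nu(A\cap\phi_{-t}A)\to0$; in particular each $\phi_{t}$, $t\neq0$, is zero-type.

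For power weak mixing fix $t_{1},\dots,t_{k}$; using that $\phi_{-t}$ is conjugate to the time-$t$ map of the reversed chain, which has the same return function $p$, we may assume every $t_{j}>0$, and put $S=\phi_{t_{1}}\times\cdots\times\phi_{t_{k}}$ on $\big((S^{\RR})^{k},\nu^{\otimes k}\big)$. The product over $j$ of the one-sided (past up to time $0$) $\sigma$-algebra of the $j$-th coordinate is an exhaustive factor for $S$, and it is exact because the associated tail $\sigma$-algebra is the product of the $k$ coordinate tails, each trivial by aperiodicity and recurrence of $p$, while the coordinates are $\nu^{\otimes k}$-independent; since $S'\equiv1$, $S$ is then a $K$-automorphism, hence ergodic, \emph{provided it is conservative}. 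To prove conservativity, observe that $\big(X^{(1)}_{nt_{1}},\dots,X^{(k)}_{nt_{k}}\big)_{n\ge0}$ is a Markov chain on $S^{k}$ --- the independent coordinate chains sampled along their own arithmetic progressions --- whose $n$-step return probability to $(a,\dots,a)$ equals $\prod_{j=1}^{k}P_{a,a}(nt_{j})=\prod_{j=1}^{k}p^{(n)}_{a,a}(t_{j})$ by the semigroup property. The hypothesis $\sum_{n}\prod_{j}p^{(n)}_{a,a}(t_{j})=\infty$ makes $(a,\dots,a)$ recurrent, and irreducibility makes it accessible from $\mu^{\otimes k}$-a.e.\ state, so $\nu^{\otimes k}$-a.e.\ point has its $S$-orbit entering, and $\nu^{\otimes k}$-a.e.\ point of $\tilde A:=\{x^{(j)}_{0}=a\text{ for all }j\}$ has its orbit returning to, the finite-measure set $\tilde A$ infinitely often. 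Therefore $\tilde A$ is a conservative sweep-out set, $S$ is conservative and thus ergodic; running this for every $k$ and all $t_{1},\dots,t_{k}$ gives power weak mixing.

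The genuinely non-routine point is the conservativity of $S$: independent copies of a null recurrent chain --- even run at the same speed --- need not form a recurrent chain, so conservativity is a real hypothesis here, supplied precisely by $\sum_{n}\prod_{j}p^{(n)}_{a,a}(t_{j})=\infty$. Everything else is standard Markov-chain input --- tail triviality for exactness, the decay $P_{x,y}(u)\to0$ for zero-type, eventual positivity of transition probabilities for accessibility --- together with the bookkeeping of identifying the correct sampled chain $\big(X^{(j)}_{nt_{j}}\big)_{j}$ and checking the sweep-out and recurrence of $\tilde A$.
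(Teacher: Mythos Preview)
Your overall architecture matches the paper's: conservativity from recurrence of the underlying chain, exactness of the one-sided semiflow (hence the natural extension is a $K$-flow), zero-type from null recurrence, and then power weak mixing by showing each product map $R=\phi_{t_{1}}\times\cdots\times\phi_{t_{k}}$ is $K$ and reducing ergodicity to conservativity, the latter supplied precisely by the divergence hypothesis $\sum_{n}\prod_{j}p_{a,a}^{(n)}(t_{j})=\infty$. Your treatment of negative $t_{j}$ via the reversed chain (same return function $p$) is exactly the right unpacking of the paper's one-line invocation of $|t_{j}|$.

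The one substantive methodological difference is in the exactness step. You argue directly that the continuous-time tail $\bigcap_{t\ge0}\phi_{t}^{-1}\cB$ is trivial by a continuous-time Orey-type statement. The paper instead passes through a discrete skeleton: for any $h>0$ the sampled chain $\{X_{nh}\}_{n\ge0}$ is aperiodic and recurrent, so its tail is trivial by Blackwell--Freedman; then, conditionally on this skeleton, the blocks $\{\phi_{nh}w\in\Lambda_{n}\}$ with $\Lambda_{n}\in\cB\cap S^{[0,h]}$ are independent, and Kolmogorov's zero-one law makes the full semiflow an exact extension of the skeleton in the sense of Aaronson--Denker, whence exactness of the semiflow follows from their Proposition~4. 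The paper's route has the advantage of matching the paper's \emph{discrete} definition of aperiodicity ($\gcd\{n\in\bbN:p(n)\neq0\}=1$) and of avoiding any direct appeal to continuous-time tail theorems; your route is shorter but tacitly assumes the continuous-time analogue of Orey, which is true here but deserves a citation or a reduction to the discrete case. Either way the conclusion and the rest of the argument are the same.
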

\begin{proof}
Since $\left\{ X_{t}\right\} _{t\geq0}$ is a null recurrent Markov
chain the flow is conservative and zero-type. 

First we show that the tail $\sigma$-field of $\left\{ X_{t}\right\} _{t\geq0}$
is trivial, hence the semiflow $\left\{ \phi_{t}\right\} _{t\in[0,\infty)}$
is exact. Let $h>0$ and observe that $\Upsilon=\left(S^{h\mathbb{N}},\cB_{S^{h\mathbb{N}}},\nu|_{h\mathbb{N}},\phi_{h}\right)$
is a factor of $ $$\mathfrak{X}=\left(S^{[0,\infty)},\mathcal{B},\nu,\phi_{h}\right)$.
Since the discrete time chain $\left\{ X_{nh}\right\} _{n\in\mathbb{N}}$
is aperiodic and recurrent, it follows by \cite{BF} that its tail
$\sigma$-algebra is trivial. 

Denote by $\mathcal{F}_{h}=\mathcal{B\cap}S^{[0,h]}.$ Then it follows
from the Markov property that given $\Upsilon$, for every $\Lambda_{0},\Lambda_{1},...,\Lambda_{n}\in\mathcal{F}_{h}$,
the sets \[
\left[w\in\Lambda_{1}\right],\left[\phi_{h}w\in\Lambda_{2}\right],...,\left[\phi_{nh}w\in\Lambda_{n}\right]\]
are independent. By Kolmogorov's zero-one law $\mathfrak{X}$ is an
exact non-singular extension of $\Upsilon$ in the sense of \cite{AD}.
Therefore, since $\Upsilon$ is exact, it follows by Proposition 4
in \cite{AD} that $\mathfrak{X}$ is exact. See also \cite[Theorem 6]{Ios}.
Therefore since the flow $\left\{ \phi_{t}\right\} _{t\in\RR}$ is
the natural extension of the semiflow, it is a K-flow. 

It follows that for every $t_{1},t_{2},..,t_{k}\in\RR$ the transformation
$R=\phi_{t_{1}}\times\phi_{t_{2}}\times\cdots\phi_{t_{n}}$ is K and
in order to prove ergodicity of $R$ it is enough to show conservativity
which is a consequence of the fact that,

\begin{eqnarray*}
\sum_{n=1}^{\infty}\prod_{j=1}^{k}p_{a,a}^{(n)}\left(\left|t_{j}\right|\right) & = & \infty.\end{eqnarray*}

\end{proof}
Example: Let 

\[
p(t)=\frac{1}{\log(e+t)}.\]
Since $p(t)$ satisfies the conditions of \cite[Theorem 6.6, p.144, see also p.41]{Kin}
there exists a continuous time Markov Chain $\left\{ X_{t}:t\in\mathbb{R}\right\} $
on a countable state space $S$ such that for some $a\in S$, \[
p_{a,a}(t):=P\left(\left.X_{t}=a\right|X_{o}=a\right)=p(t).\]
 Since \[
p(n)=\frac{1}{\log(e+n)}\neq0,\ \sum_{n=1}^{\infty}p(n)=\infty\]
and for every $t_{1},t_{2},..,t_{n}>0$, \[
\sum_{n=1}^{\infty}\prod_{j=1}^{k}p_{a,a}^{(n)}(t_{j})=\sum_{n=1}^{\infty}\prod_{j=1}^{k}p_{a,a}(n\cdot t_{j})=\sum_{n=1}^{\infty}\prod_{j=1}^{k}\frac{1}{\log\left(e+t_{j}\cdot n\right)}=\infty,\]
it satisfies the conditions of Theorem \ref{thm:Power weakly mixing flow}
and hence the Markov flow defined by the Markov Chain is conservative,
zero type and power weakly mixing. 

\begin{singlespace}
\textit{\textcolor{black}{Acknowledgments.}}\textcolor{red}{ }\textcolor{black}{The
author would like to thank his advisor Prof. Jon Aaronson for his
time, patience and advice. }
\end{singlespace}

\end{document}